\title{Separation bodies: a conceptual dual to floating bodies}
\author{Rolf Schneider}
\date{}
\newcommand{\Sd}{{\mathbb S}^{d-1}}
\newcommand{\R}{{\mathbb R}}
\newcommand{\bP}{{\mathbb P}}
\newcommand{\Rd}{{\mathbb R}^d}
\newcommand{\N}{{\mathbb N}}
\newcommand{\Ha}{\mathcal{H}}
\newcommand{\D}{{\rm d}}
\newcommand{\bE}{{\mathbb E}}
  \renewcommand{\exp}{{\rm exp}\,}
\newtheorem{theorem}{Theorem}%[section]
\newtheorem{lemma}{Lemma}%[section]
\newtheorem{definition}{Definition}%[section]
\begin{document}
\maketitle

\begin{abstract}
Let $K$ be a convex body in Euclidean space ${\mathbb R}^d$, and let a translation invariant, locally finite Borel measure on the space of hyperplanes in ${\mathbb R}^d$ be given. For $\delta\ge 0$, we consider the set of all points $x$ for which the set of hyperplanes separating $K$ and $x$ has measure at most $\delta$. This defines the separation body of $K$, with respect to the given measure and the parameter $\delta$. Separation bodies are meant as conceptual duals to floating bodies, and they are expected to play a role in the investigation of random polytopes generated as intersections of random halfspaces, in a similar way that floating bodies are useful for studying convex hulls of random points. After discussing some elementary properties of separation bodies, we carry out first examples to this effect.\\[1mm]
{\em Keywords:} Floating body; separation body; Poisson hyperplane process; random polytope; selection expectation\\[1mm]
2010 Mathematics Subject Classification: Primary 52A20, Secondary 60D05
\end{abstract}

\section{Introduction}\label{sec1}

For a convex body $K$ (a compact, convex set with interior points) in Euclidean space $\R^d$ and for a sufficiently small parameter $\delta\ge 0$, the {\em floating} body $K_\delta$ is defined as follows. According to the approach of B\'{a}r\'{a}ny and Larman \cite{BL88}, for $x\in K$ let
$$ v(x):= \min\{V(K\cap H^+):x\in H^+,\, H^+\mbox{ a closed halfspace}\},$$
where $V$ denotes the volume, and define
$$ K(v\ge \delta):= \{x\in K:v(x)\ge \delta\}.$$ 
By the approach of Sch\"utt and Werner \cite{SW90}, for $u\in \Sd$ (the unit sphere of $\R^d$) and for sufficiently small $\delta\ge 0$ there is a unique number $t(u,\delta)\ge 0$ such that
$$ V(K\cap H^+(u,h(K,u)-t(u,\delta)))=\delta,$$
where $h(K,\cdot)$ is the support function of $K$ and 
$$ H^+(u,\tau):= \{x\in \R^d: \langle x,u\rangle\ge \tau\},\qquad H^-(u,\tau):= \{x\in \R^d: \langle x,u\rangle\le \tau\},$$
with $\langle\cdot\,,\cdot\rangle$ denoting the scalar product of $\Rd$. Define the closed convex set
$$ K_\delta:= \bigcap_{u\in \Sd} H^-(u,h(K,u)-t(u,\delta)).$$
It is easy to see that $ K(v\ge\delta)=K_\delta$. For some historical remarks, we refer to \cite[Section 10.6]{Sch14}.

B\'{a}r\'{a}ny and Larman have introduced the floating body (though not under this name) as a tool in stochastic geometry, mainly for treating certain questions about convex hulls of independent uniform random points in a given convex body. For this, and for expansions of this approach, we refer to \cite{Bar89}, \cite{Bar07}, \cite{Bar08}, \cite{BL88}, \cite{BV93}, \cite{Fre13}. The starting point of Sch\"utt and Werner was a formula of Blaschke for introducing the affine surface area of a three-dimensional convex body with analytic boundary, which was extended to higher dimensions and weaker smoothness assumptions by Leichtwei{\ss} (references are in  \cite[Section 10.6]{Sch14}). In \cite{SW90} this was further extended to general convex bodies. The introduction of convex floating bodies by these authors was the beginning of a long series of investigations on relations between floating bodies and affine surface area, general studies of floating bodies, their relation to approximations of convex bodies, extended notions of affine surface area, analogues of floating bodies, for example with volume replaced by surface area; see \cite{BLW18}, \cite{BSW18}, \cite{BW17}, \cite{BW18}, \cite{Fre12}, \cite{LSW19}, \cite{NSW19}, \cite{Sch91}, \cite{Sch99}, \cite{Sch02}, \cite{SW92}, \cite{SW94}, \cite{SW04}, \cite{Wer96}, \cite{Wer99}, \cite{Wer02}, \cite{Wer06}, \cite{Wer07}, \cite{WY08}, \cite{WY10}, \cite{WY11}.

We emphasize one of the analogues of the floating body. For $x\in \R^d$, let
$$ K^x:={\rm conv}(K\cup\{x\}).$$
For $\delta\ge 0$, Werner \cite{Wer94} defined the {\em illumination body} of $K$ with parameter $\delta$ by
\begin{equation}\label{1.1} 
K^\delta:= \{x\in\R^d: V(K^x)-V(K)\le\delta\}.
\end{equation}
Since this construction is invariant under volume preserving affine transformations, also the illumination body can be used to yield, by approximation, the affine surface area. 

Mordhorst and Werner \cite{MW17, MW17b} investigated in how far floating bodies and illumination bodies can be considered as approximately dual to each other. We quote from \cite{MW17}: ``We investigate a duality relation between floating and illumination bodies. The definition of these two bodies suggests that the polar of the floating body should be similar to the illumination body of the polar.''

The aim of the following is to introduce a modification of the illumination bodies of $K$, which we call separation bodies. They are no longer affinely related to $K$ (and hence cannot be expected to be relevant for the affine surface area), but can with good reasons be considered as a more proper dual to floating bodies. Here `dual' does not point to an exact duality, but is meant in a conceptual sense. If the separation body is constructed with a motion invariant measure, one may conjecture, modifying the citation above,  that the polar of the floating body is `more similar' (expressed by a better order of approximation) to the separation body of the polar than to the illumination body of the polar. This, however, has not yet been verified.

The starting point of the present note is the role of the floating body in stochastic geo\-metry. It was introduced there to support the investigation of random polytopes which are generated as convex hulls of random points in a given convex body. The dual generation of random polytopes, namely as the intersection of random halfspaces containing a given convex body, calls for a conceptually dual construction. After introducing separation bodies in the next section, we discuss first applications of them in Section \ref{sec3}. The explanation of our main results is postponed to that section, since some preparations are required.

\section{Separation bodies: definition and elementary properties}\label{sec2}

Let $K\subset\R^d$ be a convex body with $o\in{\rm int}\,K$. We recall that its polar body is defined by
$$ K^\circ = \{x\in\R^d: \langle x,y\rangle \le 1 \mbox{ for all }y\in K\}.$$

In the definition of floating bodies, the volumes of nonempty caps $K\cap H^+$, where $H^+$ is a closed halfspace, play a role. What corresponds to such a cap under duality? One possible effect of duality is that points and hyperplanes interchange their roles. Let $\Ha$ denote the space of hyperplanes in $\R^d$, with its usual topology. For a set $M\in\R^d$, we write 
$$\Ha_M:=\{H\in\Ha:H\cap M\not=\emptyset\}.$$ 
For a convex body $K$ and for $x\in\R^d$, we denote by $\Ha(K|x)$ the set of hyperplanes that weakly separate $K$ and $x$. Hyperplanes will often be written in the form
$$ H(u,\tau)=\{x\in \R^d:\langle x,u\rangle=\tau\}\quad\mbox{with }u\in \Sd,\,\tau\in \R. $$
We define a map $\eta:\R^d\setminus\{o\}\to\Ha\setminus \Ha_{\{o\}}$ by
$$ \eta(ru)=H(u,r^{-1}) \quad\mbox{for }u\in\Sd,\,r>0.$$
Now let $K\cap H^+$ be a full-dimensional cap of $K$, where $H^+$ is a closed halfspace bounded by a hyperplane $H$  and not containing $o$. Let $x:= \eta^{-1}(H)$. Then $x\notin K^\circ$, and $\eta$ maps the cap $K\cap H^+$ bijectively onto the set $\Ha(K^\circ|x)$ of hyperplanes weakly separating $K^\circ$ and $x$. (The easy proof can be found in \cite{HS19}.) 

Instead of the Lebesgue measure on sets of points (which is up to a factor the unique translation invariant, locally finite Borel measure), we consider now a translation invariant and locally finite Borel measure on the space $\Ha$ of hyperplanes (and not the image measure of the Lebesgue measure under the map $\eta$, because our goal is a translation invariant construction). Any such measure is a constant multiple of a measure of the form
\begin{equation}\label{2.1} 
\nu= \int_{\Sd} \int_{\R} {\mathbbm 1}\{H(u,\tau)\in\cdot\}\,\D\tau\,\varphi(\D u),
\end{equation}
with an even probability measure $\varphi$ on the sphere $\Sd$ (see \cite[(4.33)]{Sch14}).

\vspace{2mm}

\noindent {\bf Assumption.} We assume in the following that an even probability measure $\varphi$ on $\Sd$ is given, which is not concentrated on a great subsphere.

\vspace{2mm}

With $\nu$ given by (\ref{2.1}), we define
\begin{equation}\label{2.1b} 
m(K,x):= \nu(\Ha(K|x)).
\end{equation}
For any convex body $K$ it follows immediately from (\ref{2.1}) that
$$ \nu(\Ha_K) = 2\Phi(K),$$
where the functional $\Phi$ is defined by 
\begin{equation}\label{2.1a}      
\Phi(K)= \int_{\Sd} h(K,u)\,\varphi(\D u).
\end{equation}
(We recall that $h(K,\cdot)$ denotes the support function of $K$.) Therefore,
$$ m(K,x) = 2[\Phi(K^x)-\Phi(K)].$$

\begin{definition}
For $\delta\ge 0$, we define
$$ K[\varphi,\delta]:=\{x\in\R^d: m(K,x)\le\delta\}.$$
We call $K[\varphi,\delta]$ the {\em separation body} of $K$ with respect to $\varphi$ with parameter $\delta$.
\end{definition}

In the special case  where $\varphi=\sigma$, the normalized spherical Lebesgue measure on $\Sd$, we get $2\Phi(K)=W(K)$, with $W$ denoting the mean width. Thus, we have
$$ K[\delta]:= K[\sigma,\delta] =\{x\in\R^d: W(K^x)-W(K)\le \delta\},$$
which is of the type (\ref{1.1}), with the volume replaced by the mean width. The bodies $K[\delta]$, which appear in \cite{BS10}, were extended in a different way by Jenkinson and Werner \cite{JW14} (compare their relations (9), (10), (11)). Their extensions, which are not necessarily convex, were used to establish a new relationship between convex geometric analysis and information theory. 

The following interesting observations are due to Olaf Mordhorst (private communication). For a general convex body $K$, one has
$$ \lim_{\delta\to 0} \frac{V(K[\sigma,\delta])-V(K)}{\delta^{2/(d+1)}} = c_d\int_{\partial K} \kappa^{-1/(d+1)}\,\D {\mathscr H}^{d-1}$$
with a constant $c_d$, where $\kappa$ is the (generalized) Gauss curvature  of $K$ and ${\mathscr H}^{d-1}$ is the $(d-1)$-dimensional Hausdorff measure. Further, suppose that the convex body $K$ is centrally symmetric and sufficiently smooth, and let $\varphi$ be the surface area measure $S_{d-1}(K,\cdot)$, normalized to a probability measure. Then the separation body $K[\varphi,\delta]$ is equal to the illumination body $K^{c\delta}$ for some constant $c>0$ depending on $K$ and the dimension $d$.

Now we state some elementary properties of separation bodies.

\begin{lemma}\label{L1}
The separation body $K[\varphi,\delta]$ is closed and convex. If the support of $\varphi$ is the whole sphere $\Sd$, then $K[\varphi,\delta]$ is strictly convex for $\delta>0$.
\end{lemma}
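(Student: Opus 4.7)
The plan is to rewrite $m(K,\cdot)$ in a manifestly convex form---from which closedness and convexity are routine---and then reduce strict convexity to a pointwise condition that I verify by a topological argument on $\Sd$. Using $h(K^x,u)=\max(h(K,u),\langle x,u\rangle)$, one has
\[
m(K,x)=2\int_{\Sd}[\langle x,u\rangle-h(K,u)]_+\,\varphi(\D u).
\]
For each $u$ the integrand is convex and $1$-Lipschitz in $x$, and is dominated by $|x|+h(K,u)$. Integration preserves convexity and, by dominated convergence, continuity. Hence $m(K,\cdot)$ is continuous and convex on $\R^d$, and $K[\varphi,\delta]=\{m(K,\cdot)\le\delta\}$ is closed and convex as a sublevel set.

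For the strict convexity claim, assume $\supp\varphi=\Sd$ and $\delta>0$. Since $m(K,x)=0$ for every $x\in K$, every boundary point of $K[\varphi,\delta]$ lies outside $K$. It suffices to show that for distinct $x_0,x_1\notin K$ with $m(K,x_0)=m(K,x_1)=\delta$ one has the strict inequality $m(K,x_{1/2})<\delta$ (convexity already gives $\le$). Along the segment $[x_0,x_1]$ the $u$-integrand is piecewise affine in the segment parameter, with a single kink where the segment meets the supporting hyperplane in direction $u$; it is strictly convex at the midpoint exactly when $h(K,u)$ lies strictly between $\langle x_0,u\rangle$ and $\langle x_1,u\rangle$. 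Writing $f_i(u):=\langle x_i,u\rangle-h(K,u)$, the claim therefore reduces to showing that the open set
\[
U=\{u\in\Sd:\ f_0(u)f_1(u)<0\}
\]
is nonempty; with $\supp\varphi=\Sd$ this automatically forces $\varphi(U)>0$.

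To prove $U\ne\emptyset$, I would argue by contradiction, assuming $f_0f_1\ge 0$ on all of $\Sd$. Set $P=\{f_0\ge 0,\,f_1\ge 0\}$, $N=\{f_0\le 0,\,f_1\le 0\}$, and $T=P\cap N=\{f_0=f_1=0\}$. Then $P,N$ are closed with $P\cup N=\Sd$, and $T$ lies in the great subsphere $E=(x_1-x_0)^\perp\cap\Sd$. The degenerate case $T=E$ is impossible: it would force $\langle x_0,u\rangle=h(K,u)$ for every $u\in E$, making the orthogonal projection of $x_0$ onto $(x_1-x_0)^\perp$ lie on every supporting hyperplane of the projection of $K$ onto the same subspace---which is empty for a convex body with nonempty interior. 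In the remaining case $T\subsetneq E$, the set $\Sd\setminus T$ is connected (the two open hemispheres of $\Sd\setminus E$ are joined through any point of the nonempty $E\setminus T$), so in the disjoint decomposition $\Sd\setminus T=(P\setminus T)\sqcup(N\setminus T)$ into relatively closed pieces one piece must be empty. However $P\subset T$ is contradicted by any $u$ with $f_0(u)>0$ (such $u$ exists since $x_0\notin K$, and then $u\in P$ by the standing assumption), and $N\subset T$ is contradicted at $u=-x_0/|x_0|$, where $f_0(u)=-|x_0|-h(K,-u)<0$ places $u$ in $N$ by the standing assumption. Every alternative collapses, so $U\ne\emptyset$.

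The delicate step is plainly this last one; its geometric core is that, for a convex body with nonempty interior, the support function cannot coincide with a linear functional on a codimension-one great subsphere of $\Sd$. The rest is bookkeeping.
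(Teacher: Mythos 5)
Your proof is correct, and it takes a genuinely different route from the paper's, most notably in the strict convexity part. For closedness and convexity, the paper works with the inclusion $K^{(1-\alpha)x+\alpha y}\subseteq(1-\alpha)K^x+\alpha K^y$ together with Minkowski linearity and monotonicity of $\Phi$, plus the Hausdorff--Lipschitz bound $|m(K,x)-m(K,y)|\le 2\|x-y\|$; your integral representation $m(K,x)=2\int_{\Sd}[\langle x,u\rangle-h(K,u)]_+\,\varphi(\D u)$ packages the same content so that convexity and Lipschitz continuity are read off pointwise, which is cleaner. For strict convexity the paper simply \emph{asserts} the existence of a supporting hyperplane $H(K,u)$ with $y$ strictly beyond it and both $x$ and $(1-\alpha)x+\alpha y$ strictly on the $K$-side, and then invokes strict monotonicity of $\Phi$; your argument isolates the exact pointwise criterion for strictness (a sign change of $f_0f_1$ on a set of positive $\varphi$-measure) and then actually \emph{proves} that the open set $U=\{f_0f_1<0\}$ is nonempty, via the decomposition $\Sd\setminus T=(P\setminus T)\sqcup(N\setminus T)$ and connectedness, with the degenerate case excluded because $h(K,\cdot)$ cannot agree with a linear functional on a whole great subsphere when $K$ has interior points. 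This supplies a detail the paper leaves implicit, which is a genuine gain. Two small points to fix: in the exclusion of $N\subseteq T$ you write $f_0(u)=-|x_0|-h(K,-u)$ for $u=-x_0/|x_0|$, where it should read $h(K,u)$; the conclusion $f_0(u)<0$ still holds because the paper's standing assumption $o\in\inn K$ gives $h(K,u)>0$ (alternatively, take $u$ pointing from $x_0$ toward an interior point of $K$, which avoids any normalization). Also, the connectedness step tacitly uses $d\ge 2$; for $d=1$ the claim is vacuous, so nothing is lost.
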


\begin{proof}
For $x,y\in\R^d$ we have
\begin{eqnarray*}
|m(K,x)-m(K,y)| &=& 2|\Phi(K^x)-\Phi(K^y)|\\
& \le &2\int_{\Sd} |h(K^x,u)-h(K^y,u)|\,\varphi(du)\\
&\le& 2\|x-y\|,
\end{eqnarray*}
since the Hausdorff distance of $K^x$ and $K^y$ is at most $\|x-y\|$.
Thus, $m(K,\cdot)$ is continuous, and hence $K[\varphi,\delta]$ is closed. 

We extend (to more general measures) from \cite{BS10} the proof that the separation body $K[\varphi,\delta]$ is convex. Let $x,y\in K[\varphi,\delta]$ and $\alpha\in[0,1]$. Let $z\in K^{(1-\alpha)x+\alpha y}$. Then there exist $k\in K$ and $\beta\in[0,1]$ with
$$ z= (1-\beta)[(1-\alpha)x+\alpha y]+\beta k.$$
It follows that
$$ z=(1-\alpha)[(1-\beta)x+\beta k] +\alpha[(1-\beta)y+\beta k] \in (1-\alpha)K^x+\alpha K^y.$$
Since $z\in K^{(1-\alpha)x+\alpha y}$ was arbitrary, this shows that
\begin{equation}\label{1} 
K^{(1-\alpha)x+\alpha y} \subseteq (1-\alpha)K^x+\alpha K^y.
\end{equation}
(A similar argument is found in F\'{a}ry and R\'{e}dei \cite{FR50}.) 
Since the functional $\Phi$ is increasing under set inclusion and is  Minkowski linear, we obtain
\begin{eqnarray*}
\Phi(K^{(1-\alpha)x+\alpha y})-\Phi(K) &\le& \Phi((1-\alpha)K^x+\alpha K^y) -\Phi(K)\\
&=& (1-\alpha)[\Phi(K^x)-\Phi(K)]+\alpha[\Phi(K^y)-\Phi(K)].
\end{eqnarray*} 
Hence,
\begin{equation}\label{2.2}  
m(K,(1-\alpha)x+\alpha y)\le (1-\alpha)m(K,x)+\alpha m(K,y)\le\delta
\end{equation}
and thus $(1-\alpha)x+\alpha y\in K[\varphi,\delta]$. This shows that $K[\varphi,\delta]$ is convex.

Now we assume that ${\rm supp}\,\varphi=\Sd$ (where  ${\rm supp}\,\varphi$ denotes the support of the measure $\varphi$). Let $\delta>0$, and let $x,y\in {\rm bd}\, K[\varphi,\delta]$, $x\not=y$. Since $m(K,x)=m(K,y)=\delta>0$, we have $x,y\notin K$. Let $\alpha\in (0,1)$. Then there exists a supporting hyperplane $H(K,u)$ of $K$ such that 
$$ x\in {\rm int}\, H^-(K,u), \quad (1-\alpha)x+\alpha y\in {\rm int}\, H^-(K,u),\quad y\in {\rm int}\, H^+(K,u),$$
hence the support function $h(K,\cdot)$ of $K$ satisfies $h(K^x,u)=h(K^{(1-\alpha)x+\alpha y},u)=h(K,u)$ and $h(K^y,u)>h(K,u)$. This gives
$$ h((1-\alpha)K^x+\alpha K^y,u)> h(K,u)=h(K^{(1-\alpha)x+\alpha y},u).$$
Therefore (\ref{1}) holds with strict inclusion. Since ${\rm supp}\,\varphi=\Sd$, the functional $\Phi$ is strictly increasing under set inclusion. Thus, (\ref{2.2}) holds with strict inequality. This, together with $m(K,x)=m(K,y)$, shows that $(1-\alpha)x+\alpha y\in{\rm int}\,K[\varphi,\delta]$ and thus that the body $K[\varphi,\delta]$ is strictly convex. 
\end{proof}

To give a simple example, we assume that $K$ is a convex polygon in the plane, and that $\varphi$ is normalized spherical Lebesgue measure on ${\mathbb S}^1$. In that case, $\Phi(K)=L(K)/2\pi$, where $L$ denotes the perimeter, and hence $m(K,x)=\frac{1}{\pi}[L(K^x)-L(K)]$. Let $E$ be the union of the affine hulls of all edges of $K$, and let $A$ be a component of ${\mathbb R}^2\setminus E$ different from ${\rm int}\,K$. For all points $x\in A$, the two support lines of $K$ through $x$ touch $K$ at the same two vertices. Therefore, the boundary of $K[\varphi,\delta]$ (for suitable $\delta>0$) inside $A$ is an arc of an ellipse. We see that the boundary of $K[\varphi,\delta]$ is a union of arcs of different ellipses. 

We note that $ m(K,x)=0\quad\mbox{for }x\in K$.
In fact, for $x\in {\rm int}\,K$ we have $\Ha(K|x)=\emptyset$. Further, $\nu(\Ha(K|x))=0$ if $x\in{\rm bd}\, K$, since the set of hyperplanes passing through a fixed point has $\nu$-measure zero. However, the separation body $K[\varphi,0]$ may be strictly larger than $K$. To make this precise, we define
$$ K_\varphi := \bigcap_{u\in {\rm supp}\,\varphi} H^-(K,u),$$
where $H^-(K,u)$ denotes the supporting halfspace of $K$ with outer normal vector $u$. Then we have
\begin{equation}\label{2.3} 
K[\varphi,0]= K_\varphi.
\end{equation}
To prove this, first let $x\in{\rm int}\,K_\varphi$. A unit normal vector of any hyperplane separating $x$ and $K$ does not belong to ${\rm supp}\,\varphi$, hence $m(K,x)=0$. Since both sides of (\ref{2.3}) are closed, this shows that $K_\varphi \subseteq K[\varphi,0]$. Conversely, let $x\in\R^d\setminus K_\varphi$. Let $U\subset\Sd$ be the (open) set of all unit vectors $u$ such that for some number $\tau$ the hyperplane $H(u,\tau)$ strictly separates $x$ and $K_\varphi$. A translate of this hyperplane supports $K_\varphi$ and also separates $x$ and $K_\varphi$, and $u$ or $-u$ is its outer unit normal vector. The set of boundary points of $K_\varphi$ where such a normal vector exists has positive $(d-1)$-dimensional Hausdorff measure, hence $S_{d-1}(K_\varphi,U)>0$, where $S_{d-1}(K_\varphi,\cdot)$ denotes the surface area measure of $K_\varphi$. By Lemma 7.5.1 in \cite{Sch14}, $S_{d-1}(K_\varphi,\Sd\setminus{\rm supp}\,\varphi)=0$. It follows that $U\cap{\rm supp}\,\varphi\not=\emptyset$ and hence that $\varphi(U)>0$. For each $u\in U$ the set of all $\tau$ for which $H(u,\tau)$ strictly separates $x$ and $K_\varphi$ is a non-degenerate interval. Now it follows that $m(K,x)>0$ and hence that $x\notin K[\varphi,0]$. Thus, (\ref{2.3}) is proved.

The previous remark shows that if we want to express the support function of $K[\varphi,\delta]$ in terms of $\delta$, we must restrict ourselves to the support of $\varphi$.

For $H\in\Ha$ we define
\begin{equation}\label{2.4}
\psi(H):= \min\{m(K,x):x\in H\}.
\end{equation}
The minimum is attained (a proof is given in Section 3 of \cite{HS19}).

\begin{lemma}\label{L1a}
For $u\in{\rm supp}\,\varphi$,
\begin{equation}\label{2.6}
h(K[\varphi,\delta],u) = \tau\Leftrightarrow \psi(H(u,\tau))=\delta.
\end{equation}
\end{lemma}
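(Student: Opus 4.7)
My plan is to establish the two implications separately, leveraging both the continuity of $m(K,\cdot)$ (already shown in Lemma~\ref{L1}) and its convexity. From $h(K^x,v)=\max\{h(K,v),\langle x,v\rangle\}$ one obtains the representation
$$
m(K,x) = 2\int_{\Sd} [\langle x,v\rangle - h(K,v)]_+\,\varphi(\D v),
$$
which exhibits $m(K,\cdot)$ as an integral of convex functions in $x$. The standing assumption that $\varphi$ is not concentrated on a great subsphere also forces $m(K,x)\to\infty$ as $\|x\|\to\infty$, so that $K[\varphi,\delta]$ is compact and its support function value is actually attained.

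For the forward direction, assume $h(K[\varphi,\delta],u)=\tau$. Attain the supremum at some $x_0\in K[\varphi,\delta]\cap H(u,\tau)$; then $\psi(H(u,\tau))\le m(K,x_0)\le\delta$ by the definitions of $\psi$ and of $K[\varphi,\delta]$. If this inequality were strict, some $x_1\in H(u,\tau)$ would satisfy $m(K,x_1)<\delta$, and by continuity $m(K,x_1+su)<\delta$ for sufficiently small $s>0$, so $x_1+su\in K[\varphi,\delta]$ and $\langle x_1+su,u\rangle=\tau+s>\tau$, contradicting the maximality of $\tau$. Hence $\psi(H(u,\tau))=\delta$.

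For the reverse direction, assume $\psi(H(u,\tau))=\delta$. The minimum in (\ref{2.4}) is attained at some $x_0\in H(u,\tau)$ with $m(K,x_0)=\delta$, so $x_0\in K[\varphi,\delta]$ and $h(K[\varphi,\delta],u)\ge\tau$. Suppose for contradiction that some $y\in K[\varphi,\delta]$ satisfies $\langle y,u\rangle>\tau$. For $u\in\supp\varphi$ the inclusions $K\subset K_\varphi=K[\varphi,0]\subset K[\varphi,\delta]$ combined with $h(K_\varphi,u)=h(K,u)$ place $\tau$ on the $u$-side of $K$, with $\tau>h(K,u)$ when $\delta>0$ (since $\psi(H(u,\tau))$ would otherwise vanish). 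Hence one may choose $z\in\inn K$ with $\langle z,u\rangle<\tau$ and $m(K,z)=0$. The segment $[z,y]$ meets $H(u,\tau)$ at $y'=(1-\lambda)z+\lambda y$ for some $\lambda\in(0,1)$, and the convexity of $m(K,\cdot)$ yields
$$
m(K,y')\le (1-\lambda)\cdot 0 + \lambda\,m(K,y) \le \lambda\delta < \delta,
$$
contradicting $\psi(H(u,\tau))=\delta$. The boundary case $\delta=0$ is handled directly by invoking $K[\varphi,0]=K_\varphi$ together with $h(K_\varphi,u)=h(K,u)$.

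The main obstacle is the reverse direction: the convexity argument requires locating $\tau$ on the correct side of $K$ so that an interior point $z$ with $\langle z,u\rangle<\tau$ and $m(K,z)=0$ is available, and this is exactly where the hypothesis $u\in\supp\varphi$ intervenes (via the identity $h(K_\varphi,u)=h(K,u)$). It also forces a separate, but immediate, treatment of the degenerate case $\delta=0$, where the strict inequality $\lambda\delta<\delta$ collapses.
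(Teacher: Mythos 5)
Your proof is correct, but it follows a genuinely different route from the paper's. The paper first records the equivalence $\psi(H)\le\delta\Leftrightarrow H\cap K[\varphi,\delta]\not=\emptyset$ (its (\ref{14.8.4a})) and then proves that $\tau\mapsto\psi(H(u,\tau))$ is \emph{strictly increasing} on $\{\tau: K\subset\inn H^-(u,\tau)\}$ by a measure-theoretic argument: the minimizer $z$ on the farther hyperplane is projected radially from a point of $K$ to a point $x$ on the nearer one, and for every direction $v$ in a suitable neighborhood of $u$ (which has positive $\varphi$-measure precisely because $u\in\supp\varphi$) the interval of heights at which $H(v,\cdot)$ separates $K$ and $z$ is strictly longer than the corresponding interval for $x$. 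You replace this monotonicity step by the convexity of $m(K,\cdot)$ (which the paper also proves, via (\ref{1}) and (\ref{2.2}) in Lemma \ref{L1}, though your integral representation $m(K,x)=2\int_{\Sd}[\langle x,v\rangle-h(K,v)]_+\,\varphi(\D v)$ is a clean shortcut) together with coercivity; your forward direction is a perturbation argument using continuity and compactness of $K[\varphi,\delta]$, and your reverse direction joins a hypothetical farther point of $K[\varphi,\delta]$ to an interior point of $K$ and uses $\lambda\delta<\delta$. In effect your convex-combination step re-derives exactly the strict monotonicity of $\psi$ on the relevant range from convexity of $m(K,\cdot)$ rather than from the structure of the measure $\nu$; both arguments invoke $u\in\supp\varphi$ at the analogous place (you through $h(K_\varphi,u)=h(K,u)$, the paper through $\varphi(U)>0$). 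Your version is arguably more self-contained, while the paper's monotonicity statement is reusable elsewhere (it is cited again in the proof of Theorem \ref{T3.2}).

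One caveat: your closing remark that the case $\delta=0$ is ``handled directly'' is too optimistic. For $\delta=0$ the reverse implication of (\ref{2.6}) genuinely fails, since $\psi(H(u,\tau))=0$ for every $\tau$ in the nondegenerate interval $[-h(K_\varphi,-u),\,h(K,u)]$ by (\ref{2.3}), whereas $h(K[\varphi,0],u)$ is a single number. This is a defect of the statement itself (shared with the paper, whose strict monotonicity also only operates for $\tau$ with $K\subset\inn H^-(u,\tau)$, and which only applies the lemma with $\delta=1/n>0$), so it does not count against your argument for $\delta>0$; but you should either restrict the claim to $\delta>0$ and $\tau>h(K,u)$ or state explicitly that only the forward implication survives at $\delta=0$.
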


\begin{proof} 
For $H\in\Ha\setminus\Ha_{{\rm int}\,K}$, we state that
\begin{equation}\label{14.8.4a} 
\psi(H)\le \delta \Leftrightarrow H\cap K[\varphi,\delta]\not=\emptyset.
\end{equation}
In fact, if $H\cap K[\varphi,\delta]\not=\emptyset$, then $H$ contains a point $x$ with $m(K,x)\le\delta$, hence $\psi(H)\le\delta$. If $H\cap K[\varphi,\delta]=\emptyset$, then every $x\in H$ satisfies $m(K,x)>\delta$, and since $\psi(H)$ is an attained minimum, also $\psi(H)>\delta$.

Let $u\in{\rm supp}\,\varphi$. We state that for all $\tau$ with $K\subset {\rm int}\,H^-(u,\tau)$, the function $\tau\mapsto \psi(H(u,\tau))$ is strictly increasing. For the proof, let $K\subset {\rm int}\,H^-(u,\tau_i)$, $i=1,2$, and let $\tau_1<\tau_2$. Let $z\in H(u,\tau_2)$ be such that $m(K,z)=\psi(H(u,\tau_2))$. Choose $o\in K$ and let $x$ be the point in the intersection of $[o,z]$ and $H(u,\tau_1)$. Every hyperplane that separates $K$ and $x$ also separates $K$ and $z$. Every neighborhood of $u$ on $\Sd$ has positive $\varphi$-measure. There is a neighborhood $U$ of $u$ with the following property. For each $v\in U$, the values $\tau$ for which the hyperplane $H(v,\tau)$ separates $K$ and $x$ make up an interval $I_x$ of positive length, and the interval $I_z$ for which $H(v,\tau)$ with $\tau\in I_z$ separates $K$ and $z$ is strictly longer than $I_x$. Now it follows that $\psi(H(u,\tau_2))=m(K,z)>m(K,x)\ge \psi(H(u,\tau_1))$.

The value $h(K[\varphi,\delta],u)$ of the support function is the maximal value $\tau$ for which $H(u,\tau)\cap K[\varphi,\delta]\not=\emptyset$. Therefore, from (\ref{14.8.4a}) and the proved strong monotonicity, the assertion of the lemma follows.
\end{proof}

\section{Applications to the $K$-cell of a Poisson hyperplane process}\label{sec3}

First we recall two appearances of floating bodies in stochastic geometry. Let $K_{(n)}$ denote the convex hull of $n$ independent uniform random points in $K$ (a convex body with interior points).  B\'{a}r\'{a}ny and Larman \cite[Thm. 1]{BL88} showed that the expected volume $\bE V(K_{(n)})$ can be estimated  in terms of the floating body $K_\delta$, in the following way. There are constants $c_1$ and $c_2(d)$ (independent of $K$) such that
\begin{equation}\label{3.1}
c_1[V(K)-V(K_{1/n})] \le V(K)-\bE V(K_{(n)}) \le c_2(d) [V(K)-V(K_{1/n})].
\end{equation}
Another result concerns the set-valued expectation of an integrable random convex body $X$. By definition, this expectation, also called the {\em selection expectation} of $X$ and denoted by $\bE X$, is the closure of the set of all integrable selections of $X$; see Molchanov \cite{Mol05}, Definition 1.12. We need here only that the support functions satisfy 
$$ h(\bE X,u)= \bE h(X,u)\quad\mbox{for }u\in\Sd;$$
see \cite[Thm. 1.22]{Mol05}.

For the convex hull $K_{(n)}$ of $n$ independent uniform random points in a convex body $K$ and for the floating body $K_\delta$, B\'{a}r\'{a}ny and Vitale \cite{BV93} have shown the existence of constants $0<a<b<\infty$ such that for all $n\in\N$,
$$
K\setminus K_{a/n} \subseteq K\setminus\bE X_{(n)}\subseteq K\setminus K_{b/n},
$$
or equivalently,
\begin{equation}\label{3.2} 
h(K_{b/n},\cdot)\le  h(\bE X_{(n)},\cdot)\le h(K_{a/n},\cdot).
\end{equation}

Dually to convex hulls of random points in a convex body, we now consider intersections of random halfspaces containing a convex body. These halfspaces are bounded by hyperplanes of a Poisson hyperplane process, and we first describe this setting.

We assume that a stationary Poisson hyperplane process $\widehat X$ in $\R^d$ is given. (We refer to \cite{SW08}, in particular Section 3.2 for Poisson processes and Section 4.4 for processes of flats. As there, we identify a simple counting measure with its support, if convenient. So we write $H\in\widehat X$ for $\widehat X(\{H\})=1$.) The (locally finite) intensity measure $\widehat \Theta =\bE\widehat X$ can be written as
$$ \widehat\Theta(A) =\widehat\gamma\int_{\Sd}\int_{\R} {\mathbbm 1}_A(H(u,\tau))\,\D\tau\,\varphi(\D u)$$
for Borel sets $A\subset\Ha$. Here $\varphi$ is an even probability measure on the sphere $\Sd$, called the {\em spherical directional distribution} of $\widehat X$. The number $\widehat\gamma>0$ is the {\em intensity} of $\widehat X$.  The underlying probability space is denoted by $(\Omega,{\bf A},\bP)$, and the distribution of $\widehat X$ is given by 
$$
\bP\left(\widehat X(A)=k\right) = e^{-\widehat\Theta(A)}\frac{\widehat\Theta(A)^k}{k!}
$$
for $k\in\N_0$ and Borel sets $A\subset\Ha$ with $\widehat\Theta(A)<\infty$.

As above, we assume that the measure $\varphi$ is not concentrated on a great subsphere. With the measure $\nu$ defined by (\ref{2.1}), we have
$$ \widehat\Theta=\widehat\gamma\nu.$$
As before, we define $m(K,x)$ by (\ref{2.1b}) and and $\Phi(K)$ by (\ref{2.1a}). %Then for any convex body $K$ we have
%$$ \bP(\widehat X(\Ha_K)=0) = e^{-\widehat\Theta(\Ha_K)}=e^{-2\widehat\gamma\Phi(K)},$$
%which will be used repeatedly.

We assume now that a convex body $K$ with interior points is given. In order to emphasize the analogy to convex hulls of $n$ random points in $K$, we assume in the following that the intensity $\widehat\gamma$ of $\widehat X$ is a number $n\in\N$.

Aiming at `dual' results, we define the {\em $K$-cell} of the Poisson hyperplane process $\widehat X$ by
$$ Z_K^{(n)}:= \bigcap_{H\in \widehat X,\,H\cap K=\emptyset} H^-(K),$$
where $H^-(K)$ is the closed halfspace bounded by the hyperplane $H$ that contains $K$. Thus, $Z_K^{(n)}$ is a random polytope containing the convex body $K$. Replacing $K$ by $\{o\}$ (where $o$ is the origin of $\R^d$), we obtain the {\em zero cell} of $\widehat X$, denoted by $Z_o^{(n)}$. If we assume (without loss of generality) that $o\in K$, then the distribution of the $K$-cell is the conditional distribution of the zero cell, under the condition that the latter contains $K$,
\begin{equation}\label{3.4}
\bP\left(Z_K^{(n)}\in\cdot\right) = \bP\left(Z_o^{(n)}\in\cdot\mid K\subset Z_o^{(n)}\right).
\end{equation}
This follows from the independence properties of Poisson processes, since the sets $\Ha_K$ and $\Ha\setminus \Ha_K$ are disjoint.

First we prove a counterpart to (\ref{3.2}). If the support of the spherical directional distribution is the whole sphere, it shows that the set-valued expectations of the random polytopes $Z_K^{(n)}$ can be estimated in terms of the (non-random) separation bodies $K[\varphi,\delta]$ of $K$, for suitable parameters  $\delta$ depending on $n$.

\begin{theorem}\label{T3.2}
Suppose that ${\rm supp}\,\varphi=\Sd$. Then
\begin{eqnarray*}
e^{-1}[h(K[\varphi,1/n],\cdot)-h(K,\cdot)]
&\le&  h(\bE Z_K^{(n)},\cdot)-h(K,\cdot)\\
&\le& (1+e^{-1})[h(K[\varphi,1/n],\cdot)-h(K,\cdot)].
\end{eqnarray*}
\end{theorem}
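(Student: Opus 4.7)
}
Fix $u\in\Sd$ and write $\tau_0:=h(K[\varphi,1/n],u)-h(K,u)$ and $\Lambda(t):=\psi(H(u,h(K,u)+t))$. Throughout the argument I rely on the selection-expectation identity $h(\bE Z_K^{(n)},u)=\bE h(Z_K^{(n)},u)$ recalled in Section~\ref{sec3}, and on the Poisson identity
$$\bP(y\in Z_K^{(n)})=\exp(-n\,m(K,y))\qquad(y\notin K),$$
which follows because the number of hyperplanes of $\widehat X$ strictly separating $K$ from $y$ is Poisson distributed with mean $n\nu(\Ha(K|y))=n\,m(K,y)$. Lemma~\ref{L1a} applied with $\delta=1/n$ characterises $\tau_0$ as the unique $t$ with $\Lambda(t)=1/n$; the proof of that lemma furthermore shows that $\Lambda$ is strictly increasing on $[0,\infty)$ with $\Lambda(0)=0$.

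\emph{Lower bound.} Let $y^\ast\in H(u,h(K,u)+\tau_0)$ realise the minimum defining $\Lambda(\tau_0)$, so $m(K,y^\ast)=1/n$. Since $K\subseteq Z_K^{(n)}$ almost surely, $h(Z_K^{(n)},u)\ge h(K,u)$, and on the event $\{y^\ast\in Z_K^{(n)}\}$, of probability $e^{-1}$, we moreover have $h(Z_K^{(n)},u)\ge\langle y^\ast,u\rangle=h(K,u)+\tau_0$. The pointwise inequality $h(Z_K^{(n)},u)-h(K,u)\ge\tau_0\,\mathbbm{1}\{y^\ast\in Z_K^{(n)}\}$ then yields the lower bound $e^{-1}\tau_0$ on taking expectations.

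\emph{Upper bound.} Apply the layer-cake identity
$$\bE h(Z_K^{(n)},u)-h(K,u)=\int_0^\infty\bP\bigl(h(Z_K^{(n)},u)\ge h(K,u)+t\bigr)\,dt$$
and split the integral at $t=\tau_0$. On $[0,\tau_0]$ the trivial bound $\bP\le 1$ contributes at most $\tau_0$. On $[\tau_0,\infty)$ I aim to use the exponential tail estimate $\bP(h(Z_K^{(n)},u)\ge h(K,u)+t)\le e^{-n\Lambda(t)}$ together with the convexity of $\Lambda$: indeed, $y\mapsto m(K,y)=2(\Phi(K^y)-\Phi(K))$ is convex on $\Rd$ (immediate from the inclusion~\eqref{1} appearing in the proof of Lemma~\ref{L1} combined with the Minkowski linearity of $\Phi$), hence $\Lambda$, being the parametric infimum of a jointly convex function over a family of parallel hyperplanes, is itself convex on $[0,\infty)$ with $\Lambda(0)=0$. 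Convexity forces $\Lambda(t)/t$ to be non-decreasing, so that $n\Lambda(t)\ge t/\tau_0$ on $[\tau_0,\infty)$, whence $\int_{\tau_0}^\infty e^{-n\Lambda(t)}\,dt\le\int_{\tau_0}^\infty e^{-t/\tau_0}\,dt=\tau_0/e$. Adding the two contributions delivers the announced upper bound $(1+e^{-1})\tau_0$.

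The main obstacle is the tail estimate. The event $\{h(Z_K^{(n)},u)\ge h(K,u)+t\}$ is the union, over all $y\in H(u,h(K,u)+t)$, of the events $\{y\in Z_K^{(n)}\}$, and the sentinel computation used in the lower bound supplies only the reverse inequality $\bP(\cdot)\ge e^{-n\Lambda(t)}$. Pushing the bound through in the opposite direction will presumably require a Markov estimate on the expected number of vertices of the random polyhedron $Z_K^{(n)}$ lying strictly beyond $H(u,h(K,u)+t)$, computed via the Mecke--Slivnyak formula applied to $\widehat X$ and exploiting that any such vertex $v$ carries a weight $e^{-n\,m(K,v)}\le e^{-n\Lambda(t)}$; recovering the clean multiplicative pre-factor $1$ there (rather than a dimensional constant) is where the full-support hypothesis on $\varphi$ and the precise description of the sentinel in Lemma~\ref{L1a} will have to enter decisively.
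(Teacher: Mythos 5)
Your lower bound is correct and a bit more streamlined than the paper's: the paper integrates $e^{-n\psi(H(u,t))}$ over $[0,\tau]$ and then uses monotonicity of $\psi$, whereas you use the single sentinel $y^\ast\in H(u,h(K,u)+\tau_0)$ with $m(K,y^\ast)=1/n$ directly, which is fine. For the upper bound, your layer-cake split at $\tau_0$ and your observation that $\Lambda$ is convex with $\Lambda(0)=0$, hence $\Lambda(t)\ge(t/\tau_0)\Lambda(\tau_0)=t/(n\tau_0)$ for $t\ge\tau_0$, reproduce exactly the paper's scaling inequality $n\,m(K,x)\ge n(t/\tau)\psi(H(u,\tau))$ for $x\in H(u,t)$, $t>\tau$ (obtained there by mapping $x\mapsto y=(\tau/t)x$ and comparing $\Ha(K|x)$ to a dilated copy of $\Ha(K|y)$). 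So the overall framework matches the paper's.

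The proposal is nonetheless incomplete, and you are candid about it: the tail estimate $\bP\bigl(h(Z_K^{(n)},u)\ge h(K,u)+t\bigr)\le e^{-n\Lambda(t)}$ is not established, and you speculate that a Mecke--Slivnyak vertex count will be needed. The paper does not take that route. Its inequality (3.10) asserts the tail bound directly: if $H(u,t)\cap Z_K^{(n)}\ne\emptyset$ then some $x\in H(u,t)$ lies in $Z_K^{(n)}$, so $\widehat X(\Ha(K|x))=0$; since $\widehat\Theta(\Ha(K|x))=n\,m(K,x)\ge n(t/\tau)\psi(H(u,\tau))$ holds for \emph{every} $x\in H(u,t)$, the paper concludes $\bP\bigl(H(u,t)\cap Z_K^{(n)}\ne\emptyset\bigr)\le e^{-n(t/\tau)\psi(H(u,\tau))}$. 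Notice that this passes from a pointwise bound $\bP(x\in Z_K^{(n)})\le e^{-n\Lambda(t)}$, valid for each fixed $x\in H(u,t)$, to the same bound on the probability of the union over all $x\in H(u,t)$ --- precisely the step you flag as delicate. If you accept the paper's shortcut at (3.10), your convexity argument then closes the proof cleanly, with the constant $1+e^{-1}$ and no extra dimensional factors; if you want a union bound justified from scratch (e.g.\ via Mecke--Slivnyak), you would be supplying a justification the paper itself does not spell out.
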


\begin{proof}
We recall that $h(\bE Z_K^{(n)},\cdot)= \bE h(Z_K^{(n)},\cdot)$.
Let $u\in \Sd$ be given. We choose
$$ o\in H(K,u)\cap K,$$
so that $h(K,u)=0$ (clearly, $\bE h(Z_K^{(n)},u)- h(K,u)$ is independent of the choice of the origin).

For $t\ge 0$,
\begin{equation}\label{3.6}
h(Z_K^{(n)},u)\ge t\Leftrightarrow H(u,t)\cap Z_K^{(n)}\not=\emptyset.
\end{equation}
We have
\begin{equation}\label{3.7}
\bE h(Z_K^{(n)},u) = \int_\Omega h(Z_K^{(n)},u)\,\D\bP
= \int_0^\infty \bP\left(h(Z_K^{(n)},u)\ge t\right)\,\D t.
\end{equation}

In dealing with the left inequality of Theorem \ref{T3.2}, we make use of the definitions (\ref{2.1b}) and (\ref{2.4}) (keeping in mind that $\widehat \Theta=n\nu$). Let $t\ge 0$, and let $z\in H(u,t)$ be such that
$$ \psi(H(u,t)) = m(K,z).$$
If no hyperplane of $\widehat X$ separates $K$ and $z$, then $z\in Z_K^{(n)}$ and hence $H(u,t)\cap Z_K^{(n)}\not=\emptyset$. It follows that
\begin{eqnarray*}
\bP\left(h(Z_K^{(n)},u)\ge t\right) &=& \bP\left(H(u,t)\cap Z_K^{(n)}\not=\emptyset\right)\\
&\ge& \bP\left(\widehat X(\Ha_{K^z}\setminus\Ha_K)=0\right)\allowdisplaybreaks\\
&=& \exp\left[-\widehat\Theta(\Ha_{K^z}\setminus\Ha_K)\right]\allowdisplaybreaks\\
&=& e^{-nm(K,z)}\\
&=& e^{-n\psi(H(u,t))}.
\end{eqnarray*}
Using (\ref{3.7}), for any $\tau>0$ we get
\begin{eqnarray*}
\bE h(Z_K^{(n)},u)&=& \int_0^\infty \bP\left(h(Z_K^{(n)},u)\ge t\right)\,\D t\nonumber\\
&\ge& \int_0^\infty  e^{-n\psi(H(u,t))}\,\D t\nonumber\\
&\ge& \int_0^\tau  e^{-n\psi(H(u,t))}\,\D t\nonumber\\
&\ge& e^{-n\psi(H(u,\tau))}\cdot\tau.
\end{eqnarray*}
(We have used that $t\mapsto \psi(H(u,t))$ is increasing.) By the choice of the origin, this is equivalent to
\begin{equation}\label{3.8}
\bE h(Z_K^{(n)},u)-h(K,u)\ge e^{-n\psi(H(u,\tau))}\tau.
\end{equation}
We finish this later.

To deal with the right inequality of Theorem \ref{T3.2}, from (\ref{3.7}) and (\ref{3.6}) we deduce, for any $\tau\ge 0$, that
\begin{eqnarray}
\bE h(Z_K^{(n)},u) &=& \int_0^\infty \bP\left(H(u,t)\cap Z_K^{(n)}\not=\emptyset\right)\,\D t\nonumber\\
&\le& \tau + \int_{\tau}^\infty \bP\left(H(u,t)\cap Z_K^{(n)}\not=\emptyset\right)\,\D t.\label{3.9}
\end{eqnarray} 

Suppose that $t>\tau$, for a given $\tau>0$, is such that $H(u,t)\cap Z_K^{(n)}\not=\emptyset$. Then there is a point $x\in H(u,t)$ that belongs to $Z_K^{(n)}$, hence no hyperplane of $\widehat X$ strictly separates $K$ and $x$. Observing that the set of hyperplanes separating $K$ and $x$, but not strictly, has $\nu$-measure zero, we obtain
\begin{equation}\label{3.10} 
\bP\left(H(u,t)\cap Z_K^{(n)}\not=\emptyset\right) \le \bP\left(\widehat X(\Ha(K|x))=0\right)=e^{-\widehat\Theta(\Ha(K|x))}.
\end{equation}

Let 
$$ y=\frac{1}{s}x \quad\mbox{be such that}\quad y\in H(u,\tau),$$
then $s=t/\tau$ (by the choice of the origin). Let $U\subset\Sd$ be the set of all unit vectors $v$ for which some hyperplane $H(v,r)$ separates $K$ and $y$. For $v\in U$ we have $\langle x,v\rangle >\langle y,v\rangle>0$ and $h(K,v)\ge 0$, hence
$$ \frac{\langle x,v\rangle-h(K,v)}{\langle y,v\rangle-h(K,v)} \ge \frac{\langle x,v\rangle}{\langle y,v\rangle}=s.$$
If a hyperplane $H(v,r)$ separates $K$ and $y$, then the hyperplane $H(v,sr)$ separates $K$ and $x$. Hence, if all hyperplanes $H(v,r)$ with $r$ in some interval $I$ separate $K$ and $y$, then all hyperplanes $H(u,r)$ with $r$ in the interval $sI$ separate $K$ and $x$. It follows that
\begin{eqnarray*}
\widehat\Theta(\Ha(K|x)) &=& n\int_{\Sd}\int_0^\infty {\mathbbm 1}\{H(v,t)\in\Ha(K|x)\}\,\D t\,\varphi(\D v)\\
&\ge& n\int_{\Sd}s\int_0^\infty {\mathbbm 1}\{H(v,t)\in\Ha(K|y)\}\,\D t\,\varphi(\D v)\\
&=& s\widehat\Theta(\Ha(K|y))\\
&\ge& sn\psi(H(u,\tau)).
\end{eqnarray*}
Now it follows from (\ref{3.9}) and (\ref{3.10}) that
\begin{eqnarray}\label{3.11}
\bE h(Z_K^{(n)},u)-h(K,u) &\le& \tau+\int_\tau^\infty e^{-n\frac{t}{\tau}\psi((H(u,\tau))}\,\D t\nonumber\\
&=& \tau\left(1+\int_1^\infty e^{-nr\psi(H(u,\tau))}\D r\right).
\end{eqnarray}

Finally, we choose $\tau$ such that $\psi(H(u,\tau))=1/n$. Then
$$ h(K[\varphi,1/n],u)-h(K,u)=\tau$$
by (\ref{2.6}) and the choice of the origin. Therefore, (\ref{3.8}) and (\ref{3.11}) yield Theorem \ref{T3.2}. 
\end{proof}

As mentioned, duality interchanges points and hyperplanes, and hence should replace measures of points by measures of hyperplanes. In this sense, a conceptual dual to the volume of the random polytope $K_{(n)}$ would not be the volume of the random polytope $Z_K^{(n)}$, but its mean width. Therefore, the following theorem can be considered as the proper dual counterpart to (\ref{3.1}).

\begin{theorem}\label{T3.3}
Suppose that ${\rm supp}\,\varphi=\Sd$. Then
$$ e^{-1}[W(K[\varphi,1/n])-W(K,)]\le  \bE W(Z_K^{(n)})-W(K)\le(1+e^{-1})[W(K[\varphi,1/n])-W(K)].$$
\end{theorem}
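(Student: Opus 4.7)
The plan is to deduce Theorem \ref{T3.3} as a direct corollary of Theorem \ref{T3.2}, by integrating the pointwise support-function bound over $\Sd$ against the normalized spherical Lebesgue measure $\sigma$ and multiplying by $2$.

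First, I would recall the representation $W(L) = 2\int_{\Sd} h(L,u)\,\sigma(\D u)$ for any convex body $L\subset\Rd$, together with the identity $h(\bE Z_K^{(n)},u) = \bE h(Z_K^{(n)},u)$ for the selection expectation (quoted in Section \ref{sec3} from \cite{Mol05}). Combining these two facts with Fubini's theorem, which is applicable because the nonnegative integrand $h(Z_K^{(n)},u)-h(K,u)$ is dominated in expectation by $(1+e^{-1})[h(K[\varphi,1/n],u)-h(K,u)]$ thanks to Theorem \ref{T3.2}, one obtains
\[
\bE W(Z_K^{(n)}) = 2\int_{\Sd} \bE h(Z_K^{(n)},u)\,\sigma(\D u) = 2\int_{\Sd} h(\bE Z_K^{(n)},u)\,\sigma(\D u) = W(\bE Z_K^{(n)}).
\]
In particular, $\bE W(Z_K^{(n)})-W(K) = 2\int_{\Sd}\bigl[\bE h(Z_K^{(n)},u)-h(K,u)\bigr]\,\sigma(\D u)$.

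Next, I would apply the operator $2\int_{\Sd}\,\cdot\,\sigma(\D u)$ to each of the three terms of the inequality in Theorem \ref{T3.2}. The left- and right-hand sides of that inequality produce exactly $e^{-1}[W(K[\varphi,1/n])-W(K)]$ and $(1+e^{-1})[W(K[\varphi,1/n])-W(K)]$, respectively, while the middle term yields $\bE W(Z_K^{(n)})-W(K)$ by the identity above. This gives the desired chain of inequalities.

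There is no genuine obstacle here: the proof is essentially a packaging of Theorem \ref{T3.2}. The only point requiring a moment's care is the justification of the exchange of expectation and integration over $\Sd$ needed to identify $\bE W(Z_K^{(n)})$ with $W(\bE Z_K^{(n)})$, but this is immediate from the uniform pointwise upper bound supplied by Theorem \ref{T3.2}, which shows that $u\mapsto \bE h(Z_K^{(n)},u)$ is bounded by a support function (hence $\sigma$-integrable on $\Sd$).
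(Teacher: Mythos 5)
Your argument matches the paper's proof exactly: integrate the pointwise inequality of Theorem \ref{T3.2} over $\Sd$ against $\sigma$, using $W(L)=2\int_{\Sd}h(L,u)\,\sigma(\D u)$, the identity $h(\bE Z_K^{(n)},\cdot)=\bE h(Z_K^{(n)},\cdot)$, and Fubini. The extra remark justifying the interchange of expectation and spherical integration is a harmless addition; the proof is correct and takes the same route as the paper.
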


\begin{proof}
Since
$$ W(K) = 2\int_{\Sd} h(K,u)\,\sigma(\D u),$$
the assertion follows from Theorem \ref{T3.2} by integration, using $\bE h(Z_K^{(n)},\cdot) = h(\bE Z_K^{(n)},\cdot)$ and Fubini's theorem.
\end{proof}

For the volume, we can prove an analogue of the left inequality of (\ref{3.1}). The proof of the right inequality in (\ref{3.1}) is more involved; it uses the `economic cap covering theorem' of B\'{a}r\'{a}ny and Larman \cite{BL88}. We do not know of a `dual' result, nor wether it would be helpful. 

\begin{theorem}\label{T3.1}
We have
$$e^{-1}[V(K[\varphi,1/n])-V(K)] \le \bE V(Z_K^{(n)}) -V(K).$$
\end{theorem}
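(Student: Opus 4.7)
The plan is to express $\bE V(Z_K^{(n)})-V(K)$ via Fubini as an integral of a point-inclusion probability over $\R^d\setminus K$, identify that probability as a simple Poisson void probability, and then restrict the integration domain to the separation body.

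First I would swap expectation and volume integral: since $K\subseteq Z_K^{(n)}$ almost surely,
\[
\bE V(Z_K^{(n)})-V(K)=\bE\int_{\R^d\setminus K}\mathbbm{1}\{x\in Z_K^{(n)}\}\,\D x
=\int_{\R^d\setminus K}\bP(x\in Z_K^{(n)})\,\D x.
\]
(Tonelli's theorem applies because the integrand is nonnegative; unboundedness of $Z_K^{(n)}$ would just make the right side $+\infty$, which is consistent with the inequality.)

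Next I would compute the integrand for $x\notin K$. By the definition of $Z_K^{(n)}$, the event $\{x\notin Z_K^{(n)}\}$ is exactly the event that some hyperplane $H\in\widehat X$ with $H\cap K=\emptyset$ satisfies $x\notin H^-(K)$, i.e.\ some hyperplane of $\widehat X$ strictly separates $K$ and $x$. The set of hyperplanes \emph{weakly} separating $K$ and $x$ that also meet $K$ consists of supporting hyperplanes of $K$ through points where $x$ lies in the outer halfspace, and this set has $\nu$-measure zero. Therefore
\[
\bP(x\in Z_K^{(n)})=\bP\bigl(\widehat X(\Ha(K|x)\setminus\Ha_K)=0\bigr)=e^{-\widehat\Theta(\Ha(K|x))}=e^{-nm(K,x)},
\]
using $\widehat\Theta=n\nu$ and $\nu(\Ha(K|x))=m(K,x)$.

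For the final step I would restrict the integral to the separation body. Since $K\subseteq K[\varphi,1/n]$ and $m(K,x)\le 1/n$ for every $x\in K[\varphi,1/n]$, we have $e^{-nm(K,x)}\ge e^{-1}$ on $K[\varphi,1/n]\setminus K$, so
\[
\int_{\R^d\setminus K}e^{-nm(K,x)}\,\D x\;\ge\;\int_{K[\varphi,1/n]\setminus K}e^{-nm(K,x)}\,\D x\;\ge\;e^{-1}\bigl[V(K[\varphi,1/n])-V(K)\bigr],
\]
which is the asserted inequality. There is no real obstacle here; the only subtlety worth flagging is the $\nu$-null distinction between strict and weak separation when converting the event $\{x\in Z_K^{(n)}\}$ into a Poisson void probability against $\Ha(K|x)$, and this is handled exactly as in the left-inequality computation in the proof of Theorem~\ref{T3.2}.
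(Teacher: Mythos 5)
Your proof is correct and follows essentially the same strategy as the paper: both reduce the claim to the identity $\bE V(Z_K^{(n)})-V(K)=\int_{\R^d\setminus K}e^{-n\,m(K,x)}\,\lambda(\D x)$ (the paper's formula (\ref{3.13})) and then restrict the integral to $K[\varphi,1/n]\setminus K$, where the integrand is at least $e^{-1}$. The only difference is in how that identity is obtained: you compute the point-inclusion probability $\bP(x\in Z_K^{(n)})=e^{-n\,m(K,x)}$ directly from the definition of the $K$-cell as a void probability of $\widehat X$ on the (essentially) strictly separating hyperplanes, whereas the paper routes through the zero cell, using the conditioning identity (\ref{3.4}) together with the indicator factorization ${\mathbbm 1}\{\widehat X(\Ha_K)=0\}{\mathbbm 1}\{\widehat X(\Ha_{[o,x]})=0\}={\mathbbm 1}\{\widehat X(\Ha_{K^x})=0\}$. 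Your route is shorter and avoids the conditioning step entirely; your handling of the $\nu$-null discrepancy between weak and strict separation (hyperplanes touching $K$ or passing through $x$) is exactly the point that needs flagging, and it matches the remark made in the paper's proof of Theorem~\ref{T3.2}.
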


\begin{proof}
The proof relies on formula (\ref{3.13}), for which we give a short proof. A related formula was proved by Kaltenbach \cite[Section 7]{Kal90}. We assume that $o\in {\rm int}\,K$. By (\ref{3.4}),
$$ \bE V(Z_K^{(n)})=\bE\left(V(Z_o^{(n)})\mid K\subset Z_o^{(n)}\right) =\bE\left(V(Z_o^{(n)}\mid \widehat X(\Ha_K)=0\right).$$
Recall that $[o,x]$ is the closed segment with endpoints $o$ and $x$. Since 
$$x\in{\rm int}\,Z_o^{(n)}\Leftrightarrow \widehat X(\Ha_{[o,x]})=0$$ 
and
$${\mathbbm 1}\{\widehat X(\Ha_K)=0\}{\mathbbm 1}\{\widehat X(\Ha_{[o,x]})=0\} = {\mathbbm 1}\{\widehat X(\Ha_{K^x})=0\},$$
we obtain (with $\lambda$ denoting Lebesgue measure in $\R^d$)
\begin{eqnarray*}
&& \bE V(Z_K^{(n)})\\
&& = \bP(\widehat X(\Ha_K)=0)^{-1} \bE\left({\mathbbm  1}\{\widehat X(\Ha_K)=0\}V(Z_o^{(n)})\right)\\
&& = e^{\widehat\Theta(\Ha_K)} \bE\int_{\R^d} {\mathbbm 1}\{\widehat X(\Ha_K)=0\}{\mathbbm 1}\{\widehat X(\Ha_{[o,x]}=0\}\,\lambda(\D x)\\
&& = e^{\widehat\Theta(\Ha_K)} \bE\int_{\R^d} {\mathbbm 1}\{\widehat X(\Ha_{K^x})=0\}\,\lambda(\D x)\\
&& = e^{\widehat\Theta(\Ha_K)} \left[\bE\int_K {\mathbbm 1}\{\widehat X(\Ha_{K^x})=0\}\,\lambda(\D x) + \bE\int_{\R^d\setminus K} {\mathbbm 1}\{\widehat X(\Ha_{K^x})=0\}\,\lambda(\D x)\right].
\end{eqnarray*}
Here the first expectation in the brackets is equal to
$$ e^{-\widehat\Theta(\Ha_K)} V(K),$$
and the second, by Fubini's theorem, is equal to
$$
\int_{\R^d\setminus K} e^{-\widehat\Theta(\Ha_{K^x})}\lambda(\D x).
$$
Thus, we obtain
\begin{equation}\label{3.13}
\bE V(Z_K^{(n)}) - V(K) = \int_{\R^d\setminus K} e^{-\widehat\Theta(\Ha_{K^x}\setminus \Ha_K)}\lambda(\D x).
\end{equation}
This yields, for any $\tau>0$,
\begin{eqnarray*}
\bE V(Z_K^{(n)}) - V(K) 
&=& \int_{\R^d\setminus K} e^{-nm(K,x)}\lambda(\D x)\\
&\ge & \int_{K[\varphi,\tau]\setminus K} e^{-nm(K,x)}\lambda(\D x)\\
&\ge& e^{-n\tau}(V(K[\varphi,\tau]\setminus K).
\end{eqnarray*}
The choice $\tau=1/n$ gives
$$ \bE V(Z_K^{(n)})-V(K) \ge e^{-1}V(K[\varphi,1/n]\setminus K)$$
and thus the assertion.
\end{proof}

\noindent Author's address:\\[2mm]Rolf Schneider\\Mathematisches Institut, Albert-Ludwigs-Universit{\"a}t\\D-79104 Freiburg i. Br., Germany\\E-mail: rolf.schneider@math.uni-freiburg.de

\end{document}